\newtheorem{assumption}{Assumption}
\newtheorem{theorem}{Theorem}
\newtheorem{lemma}{Lemma}
\newtheorem{claim}{Claim}
\title{Model order reduction for Linear Noise Approximation using time-scale separation (Extended Version)}
\author{Narmada Herath%
\thanks{Narmada Herath is with the Department of Electrical Engineering and Computer Science, Massachusetts Institute of Technology, 77 Mass. Ave, Cambridge MA
        {\tt\small nherath@mit.edu}}%
        \ \ and Domitilla Del Vecchio%
\thanks{Domitilla Del Vecchio is with the Department of Mechanical Engineering, Massachusetts Institute of Technology, 77 Mass. Ave, Cambridge MA   {\tt\small ddv@mit.edu}}%
}
\date{}							
\begin{document}
\maketitle
\begin{abstract}


In this paper, we focus on model reduction of biomolecular systems with multiple time-scales, modeled using the Linear Noise Approximation. Considering systems where the Linear Noise Approximation can be written in singular perturbation form, with $\epsilon$ as the singular perturbation parameter, we obtain a reduced order model that approximates the slow variable dynamics of the original system. In particular, we show that, on a finite time-interval, the first and second moments of the reduced system are within an $O(\epsilon)$-neighborhood of the first and second moments of the slow variable dynamics of the original system. The approach is illustrated on an example of a biomolecular system that exhibits time-scale separation. 
\end{abstract}

\section{Introduction}
\normalsize
Time-scale separation is a ubiquitous feature in biomolecular systems, which enables the separation of the system dynamics into `slow' and `fast'. This property is widely used in biological applications to reduce the complexity in dynamical models. In deterministic systems, where the dynamics are modeled using ordinary differential equations, the process of obtaining a reduced model is well defined by singular perturbation and averaging techniques \cite{khalil, pavliotis}.  However, employing time-scale separation to obtain a reduced order model remains an ongoing area of research for stochastic models of biological systems \cite{kim2014validity}. 

Biological systems are inherently stochastic due to randomness in chemical reactions \cite{andrews2009stochastic, mcquarrie1967stochastic}. Thus, different stochastic models have been developed to capture the randomness in the system dynamics, especially at low population numbers.  The chemical Master equation is a prominent stochastic model which considers the species counts as a set of discrete states and provides a description for the time-evolution of their probability density functions \cite{gardiner1985handbook, kampen}. However, analyzing the chemical Master equation directly proves to be a challenge due to the lack of analytical tools to analyze its behavior. Therefore, several approximations of the Master equation have been developed, which provide good descriptions of the system dynamics under certain assumptions. The chemical Langevin equation (CLE) is one such approximation, where the dynamics of the chemical species are described as a set of stochastic differential equations \cite{gillespie}. The Fokker-Plank equation is another method equivalent to the CLE, which considers the species counts as continuous variables and provides a description of the time evolution of their probability density functions \cite{gardiner1985handbook}. The Linear Noise Approximation (LNA) is another approximation, where the system dynamics are portrayed as stochastic fluctuations about a deterministic trajectory, assuming that the system volume is sufficiently large such that the fluctuations are small relative to the average species counts \cite{kampen, elf2003fast}.

In our previous work, we considered a class of stochastic differential equations in singular perturbation form, which captures the case of multiple scale chemical Langevin equation with linear propensity functions. We obtained a reduced order model for which the error between the moment dynamics were of $O(\epsilon)$, where $\epsilon$ is the singular perturbation parameter \cite{herathACC2015, AUCC2015, herathACC2016}. In this work, we consider systems with nonlinear propensity functions, modeled using the Linear Noise Approximation.

There have been several works that obtain reduced order models for systems modeled using LNA, under different approaches for time-scale separation. One such model is derived by Pahlajani et. al, in \cite{pahlajani2011stochastic}, where the slow and fast variables are identified by categorizing the chemical reactions as slow and fast. In \cite{thomas2012rigorous, thomas2012slow}, Thomas et. al, derive a reduced order model by considering the case where the species are separated using the decay rate of their transients, according to the quasi-steady-state approximation for chemical kinetics. It is also shown that, imposing the time-scale separation conditions arising from slow and fast reactions, on their model, leads to the same reduced model obtained in \cite{pahlajani2011stochastic}.
In these previous works, the error between the original system and the reduced system has been studied numerically and has not been analytically quantified. The work by Sootla and Anderson in \cite{sootla2014CDC} gives a projection-based model order reduction method for systems modeled by the Linear Noise Approximation. This work is extended in \cite{sootla2015structured} by the same authors, where they also provide an error quantification in mean square sense for the reduced order model derived in \cite{thomas2012rigorous} under quasi-steady state assumptions. However, to provide an error bound the authors explicitly use the Lipschitz continuity of the diffusion term, which is not Lipschitz continuous in general. 

In this paper, we consider biomolecular systems modeled using the Linear Noise Approximation where system dynamics are represented by a set of ordinary differential equations that give the deterministic trajectory and a set of stochastic differential equations that describe the stochastic fluctuations about the deterministic trajectory. We consider the case where the system dynamics evolve on well separated time-scales with slow and fast reactions, and the LNA can be written in singular perturbation form with $\epsilon$ as the singular perturbation parameter, as in \cite{pahlajani2011stochastic}. We define a reduced order model and prove that the first and second moments of the reduced system are within an $O(\epsilon)$-neighborhood of the first and second moments of the original system. Our results do not rely on Lipschitz continuity assumptions on the diffusion term of the LNA.

This paper is organized as follows. In Section \ref{model}, we describe the model considered. In Section \ref{pre_results}, we define the reduced system and derive the moment dynamics for the original and reduced systems.  In Section \ref{main_results}, we prove the main convergence results. Section \ref{examples} illustrates our approach with an example and Section \ref{conclusion} includes the concluding remarks.

\section{System Model}
\label{model}
\subsection{Linear Noise Approximation}
Consider a biomolecular system with $n$ species interacting through $m$ reactions in a given volume $\Omega$. The Chemical Master Equation (CME) describes the evolution of the probability distribution for the species counts to be in state ${Y} = (Y_1, \ldots, Y_N)$, by the ordinary differential equation 
\begin{align}
	\frac{\partial P(Y,t)}{\partial t} = \sum_{i = 1}^{m} [{a}_i(Y - v_i, t) P(Y - v_i, t) - {a}_i(Y, t) P(Y, t)],
\end{align}
where ${a}_i(Y,t)$ is the microscopic reaction rate with ${a}_i(Y,t) dt$ being the probability that a reaction $i$ will take place in an infinitesimal time step $dt$ and $v_i$ the change in state produced by reaction $i$ for $i = 1, \ldots, m$ \cite{gillespie2007stochastic}. 

The Linear Noise Approximation (LNA) is an approximation to the CME obtained under the assumption that the system volume $\Omega$ and the number of molecules in the system are large \cite{kampen}. To derive the LNA it is assumed that $Y = \Omega y + \sqrt{\Omega} \xi$, where $y$ is a deterministic quantity and $\xi$ is a stochastic variable accounting for the stochastic fluctuations.  Then by expanding the chemical Master equation in a Taylor series and equating the terms of order $\Omega^{1/2}$ and $\Omega^{0}$, it is shown that $y$ is the macroscopic concentration and $\xi$ is a Gaussian process whose dynamics are given by   \cite{kampen, elf2003fast}  
\begin{align}
\label{LNA_intro1}	\dot{y} &= f(y,t),\\
\label{LNA_intro2}	\dot{\xi} &= A(y,t) \xi + \sigma(y,t) \Gamma,
\end{align}
where $\Gamma$ is an $m$-dimensional white noise process, $f(y,t) = \sum_{i = 1}^m v_i \tilde{a}_i(y,t)$, $A(y,t) = \frac{\partial f(y,t)}{\partial y}$ and $\sigma(y,t) = [v_1 \sqrt{\tilde{a}_1(y,t)}, \ldots, v_m \sqrt{\tilde{a}_m(y,t)}]$. $\tilde{a}_i(y,t)$ is the macroscopic reaction rate which can be approximated by $\tilde{a}_i(y,t) = \frac{1}{\Omega}{a}_i(\Omega y,t)$ at the limit of $\Omega \to \infty$ and  $Y \to \infty$ such that the concentration $y = Y/\Omega$ remains constant \cite{gillespie2009deterministic}.

\subsection{Singularly Perturbed System}

We consider the case where the biomolecular system in (\ref{LNA_intro1}) - (\ref{LNA_intro2}) exhibits time-scale separation, with $m_s$ slow reactions and $m_f$ fast reactions where $m_s + m_f = m$. This allows the use of a small parameter $\epsilon$ to decompose the reaction rate vector as $\tilde{a}(y,t) = [\hat{a}_s(y,t), (1/\epsilon)\hat{a}_f(y,t)]^T$ where $\hat{a}_s(y,t) \in  \mathbb{R}^{m_s} $ represents the reaction rates for the slow reactions and $(1/\epsilon)\hat{a}_f(y,t) \in \mathbb{R}^{m_f}$ represents the reaction rates for the fast reactions. The corresponding $v_i$ vectors representing the change of state by each reaction $i$ could be represented as $v = [v_1, \ldots, v_{m_s}, v_{m_s + 1}, \ldots, v_{m_s + m_f}]$ for $m_s$ slow and $m_f$ fast reactions. 
However, such a decomposition does not guarantee that the individual species in the system will evolve on well-separated time-scales. Therefore, a coordinate transformation may be necessary to identify the slow and fast variables in the system as seen in deterministic systems  \cite{jayanthi2011retroactivity} and chemical Langevin models \cite{contou2011model}. Thus, we make the following claim.

\begin{claim}
Assume there is an invertible matrix $A = [A_x, A_z]^T$ with $A_x \in \mathbb{R}^{n_s \times n}$ and $A_z \in \mathbb{R}^{n_f \times n}$, such that the change of variables $x = A_xy$, $z= A_zy$, allows the deterministic dynamics in (\ref{LNA_intro1}) to be written in the singular perturbation form 
\begin{align}
\label{sys_ori1_cl}		\dot{x} &= f_x(x,z,t),\\
\label{sys_ori2_cl}		\epsilon \dot{z} &= f_z(x,z,t,\epsilon).	
\end{align}
Then, the change of variables $\psi_x = A_x\xi$, $\psi_z= A_z\xi$ takes the dynamics of the stochastic fluctuations given in (\ref{LNA_intro2}), in to the singular perturbation form
\begin{align}
\label{sys_ori3_cl}		\dot{\psi_x} &= A_{1}(x,z,t) \psi_x + A_2(x,z,t) \psi_z + \sigma_x(x,z,t) \Gamma_x,\\
\label{sys_ori4_cl}		\epsilon \dot{\psi_z} &= B_1(x,z,t,\epsilon) \psi_x + B_2(x,z,t,\epsilon) \psi_z + \sigma_z(x,z,t, \epsilon) \Gamma_z,	
\end{align}
where $\Gamma_x$ is an $m_s$-dimensional white noise process, $\Gamma_z = [\Gamma_x, \Gamma_f]^T, $  where $\Gamma_f$ is an $m_f$-dimensional white noise process and  
\small
\begin{align*}
	&A_{1}(x,z,t)  = \frac{\partial  f_x(x,z,t)}{\partial x}, \\
	&A_{2}(x,z,t)  = \frac{\partial  f_x(x,z,t)}{\partial z}, \\
	&B_1(x,z,t,\epsilon)  = \frac{\partial  f_z(x,z,t,\epsilon)}{\partial x}, \\
	&B_2(x,z,t,\epsilon)  = \frac{\partial  f_z(x,z,t,\epsilon)}{\partial z}, \\
	 &\sigma_x(x,z,t) =   A_x\left [v_1\sqrt{\hat{a}{_s}_1(A^{-1}[x,z]^T,t)}, \ldots, v_{m_s}\sqrt{\hat{a}{_s}_{m_s}(A^{-1}[x,z]^T,t)}\right],\\
	&\sigma_z(x,z,t, \epsilon) =  \left[\hspace{-0.5em}\begin{array}{l} \epsilon A_z\left [v_1\sqrt{\hat{a}{_s}_1(A^{-1}[x,z]^T,t)}, \ldots, v_{m_s}\sqrt{\hat{a}{_s}_{m_s}(A^{-1}[x,z]^T,t)}\right]  \\  A_z \bigg [v_{m_s +1} \sqrt{ \epsilon \hat{a}{_f}_1(A^{-1}[x,z]^T,t)}, \ldots, \\ \hspace{11.5em}v_{{m_s +m_f}}\sqrt{ \epsilon \hat{a}{_f}_{m_f}(A^{-1}[x,z]^T,t)}\bigg] \end{array} \hspace{-0.75em} \right]^T.
\end{align*}
\normalsize
\begin{proof}
See Appendix A-1.\\
\end{proof}
\end{claim}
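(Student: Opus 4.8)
The plan is to verify the claim by direct substitution, exploiting the linearity of the change of variables and the block structure induced by the slow/fast decomposition of the reactions. Since $A = [A_x, A_z]^T$ is a constant invertible matrix, we have $y = A^{-1}[x,z]^T$ and, because differentiation is linear, $\psi_x = A_x\xi$ and $\psi_z = A_z\xi$ satisfy $\dot\psi_x = A_x\dot\xi$, $\dot\psi_z = A_z\dot\xi$. So the whole proof reduces to substituting $\dot\xi = A(y,t)\xi + \sigma(y,t)\Gamma$ from (\ref{LNA_intro2}) and re-expressing everything in the new coordinates.

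First I would handle the drift term. Write $\xi = A^{-1}[\psi_x, \psi_z]^T$, so $A_x \dot\xi = A_x A(y,t) A^{-1}[\psi_x,\psi_z]^T$ with $A(y,t) = \partial f/\partial y$. The key observation is that $x = A_x y$ together with (\ref{sys_ori1_cl}) forces $A_x f(y,t) = f_x(A_x y, A_z y, t)$ as an identity in $y$; differentiating this identity with respect to $y$ and using the chain rule gives $A_x \frac{\partial f}{\partial y} = \frac{\partial f_x}{\partial x}A_x + \frac{\partial f_x}{\partial z}A_z$, i.e. $A_x A(y,t) A^{-1} = [A_1, A_2]$ in the $(\psi_x,\psi_z)$ blocks. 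This yields exactly $\dot\psi_x = A_1\psi_x + A_2\psi_z$ plus the noise term. The analogous computation starting from $\epsilon\dot z = f_z$, i.e. $\epsilon A_z f(y,t) = f_z(A_xy,A_zy,t,\epsilon)$, gives $\epsilon A_z A(y,t)A^{-1} = [B_1, B_2]$, producing the drift part of (\ref{sys_ori4_cl}) after dividing through by the factor of $\epsilon$ already present from writing $\epsilon\dot\psi_z = \epsilon A_z\dot\xi$. (One must be a little careful: the $\epsilon$ on the left of (\ref{sys_ori2_cl}) is what makes $f_z$ rather than $\epsilon f_z$ appear, and I would track it explicitly.)

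Next I would handle the diffusion term. Here $\sigma(y,t)\Gamma = \sum_{i=1}^m v_i\sqrt{\tilde a_i(y,t)}\,\Gamma_i$, and the decomposition $\tilde a = [\hat a_s, (1/\epsilon)\hat a_f]^T$ splits this sum into a slow part $\sum_{i=1}^{m_s} v_i\sqrt{\hat a_{s,i}}\,\Gamma_{x,i}$ and a fast part $\sum_{j=1}^{m_f} v_{m_s+j}\sqrt{(1/\epsilon)\hat a_{f,j}}\,\Gamma_{f,j}$. Applying $A_x$ gives precisely $\sigma_x(x,z,t)\Gamma_x$ as defined (the fast reactions' columns, when multiplied by $A_x$, should vanish or be absorbed — I would check whether the claim implicitly assumes $A_x v_i = 0$ for fast $i$, which is the usual structural consequence of the coordinate change; if not, there is a term to reconcile). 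Applying $A_z$ and then multiplying by $\epsilon$ (to match $\epsilon\dot\psi_z$) turns the slow columns into $\epsilon A_z v_i\sqrt{\hat a_{s,i}}$ and the fast columns into $\epsilon A_z v_{m_s+j}\sqrt{(1/\epsilon)\hat a_{f,j}} = A_z v_{m_s+j}\sqrt{\epsilon\hat a_{f,j}}$, which is exactly the stated $\sigma_z(x,z,t,\epsilon)$, with noise vector $\Gamma_z = [\Gamma_x,\Gamma_f]^T$.

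The main obstacle I anticipate is bookkeeping rather than conceptual difficulty: keeping the factors of $\epsilon$ consistent between the $\epsilon\dot z$ convention and the rescaled fast-reaction rates, and confirming the structural fact that $A_x$ annihilates (or appropriately handles) the fast-reaction stoichiometry so that $\psi_x$ genuinely has no $\Gamma_f$ driving it. I would state that structural property explicitly — it is what the phrase ``such that the change of variables $\ldots$ allows the deterministic dynamics $\ldots$ to be written in singular perturbation form'' is really encoding — and then the rest is the chain-rule identity $A_x\partial f/\partial y = \partial f_x/\partial x\,A_x + \partial f_x/\partial z\,A_z$ (and its $f_z$ counterpart) applied to the already-established deterministic decomposition.
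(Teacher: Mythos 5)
Your proposal is correct and follows essentially the same route as the paper's Appendix A-1: direct substitution of $\dot\xi$ into $\dot\psi_x = A_x\dot\xi$, $\dot\psi_z = A_z\dot\xi$, the chain-rule identity $A_x\,\partial f/\partial y = (\partial f_x/\partial x)A_x + (\partial f_x/\partial z)A_z$ (and its $f_z$ counterpart with the $\epsilon$ factor tracked), and the splitting of $\sigma\Gamma$ into slow and fast columns. The structural fact you flagged, $A_xv_i = 0$ for $i = m_s+1,\ldots,m_s+m_f$, is exactly what the paper extracts from the deterministic decomposition (its equation for $\dot x$ having no $1/\epsilon$ term) before killing the $\Gamma_f$ contribution to $\dot\psi_x$, so your anticipated reconciliation is precisely the paper's argument.
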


Based on the result of Claim 1, in this work we consider the Linear Noise Approximation represented in the singular perturbation form: 
\begin{align}
\label{sys_ori1}		\dot{x} &= f_x(x,z,t), \hspace{10em} x(0) = x_0,\\
\label{sys_ori2}		\epsilon \dot{z} &= f_z(x,z,t,\epsilon),  \hspace{9.4em} z(0) = z_0,	\\
\label{sys_ori3}		\dot{\psi_x} &= A_{1}(x,z,t) \psi_x + A_2(x,z,t) \psi_z + \sigma_x(x,z,t) \Gamma_x,  \qquad  \psi_x(0) = {\psi_x}_0,\\
\label{sys_ori4}	 	\epsilon \dot{\psi_z} &= B_1(x,z,t,\epsilon) \psi_x + B_2(x,z,t,\epsilon) \psi_z + \sigma_z(x,z,t, \epsilon) \Gamma_z,	\ \psi_z(0) = {\psi_z}_0,
\end{align}
where 
$x \in D_x \subset \mathbb{R}^{n_s}$, $\psi_x \in D_{\psi_x} \subset \mathbb{R}^{n_s}$ are the slow variables and  $z \in D_z \subset \mathbb{R}^{n_f}$, $\psi_z \in D_{\psi_z} \subset \mathbb{R}^{n_f}$ are the fast variables.  $\Gamma_x$ is an $m_s$-dimensional white noise process. Then, $\Gamma_z = [\Gamma_x, \Gamma_f]^T, $ where $\Gamma_f$ is an $m_f$-dimensional white noise process. 

We refer to the system (\ref{sys_ori1})  - (\ref{sys_ori4}) as the original system and obtain a reduced order model when $\epsilon = 0$. To this end, we make the following assumptions on system (\ref{sys_ori1})  - (\ref{sys_ori4}) for $x \in D_x \subset \mathbb{R}^{n_s}$, $z \in D_z \subset \mathbb{R}^{n_f}$ and $t \in [0, t_1]$.

\begin{assumption}
\label{a1} \rm{ The functions $f_x(x, z, t)$, $f_z(x, z, t, \epsilon)$ are twice continuously differentiable. The Jacobian $\frac{\partial f_z(x, z, t, 0) }{\partial z}$ has continuous first and second partial derivatives with respect to its arguments.}
\end{assumption}

\begin{assumption}
\label{a2} \rm{The matrix-valued functions $\sigma_x(x, z, t)\sigma_x(x, z, t)^T$, $\sigma_z(x, z, t, \epsilon)[\sigma_x(x, z, t) \ 0]^T$ and $\sigma_z(x, z, t, \epsilon)\sigma_z(x, z, t, \epsilon)^T$ are continuously differentiable. Furthermore, we have that $\sigma_z(x, z, t, 0) = 0$ and $\lim_{\epsilon \to 0} \frac{\sigma_z(x,z,t,\epsilon)\sigma_z(x,z,t,\epsilon)^T}{\epsilon} = \sigma(x,z,t)$ where $\sigma(x,z,t)$ is bounded for given $x, z, t$ and $\frac{\partial \sigma(x,z,t)}{\partial z}$ is continuous.}
\end{assumption}

\begin{assumption}
\label{a3}
\rm{There exists an isolated real root $z = \gamma_1(x,t)$, for the equation $f_z(x,z,t,0) = 0$, for which, the matrix $\frac{\partial f_z(x, z, t, 0)}{\partial z} \big|_{z=\gamma_1(x,t)}$ is Hurwitz, uniformly in $x$ and $t$. Furthermore, we have that the first partial derivative of $\gamma_1(x,t)$ is continuous with respect to its arguments. Also, the initial condition $z_0$ is in the region of attraction of the equilibrium point $z = \gamma_1(x_0,0)$ for the system $\frac{dz}{d \tau} = f_z(x_0,z,0,0)$.}
\end{assumption}

\begin{assumption}
\label{a4}
\rm{The system $\dot{\bar{x}} = f_x(\bar{x},\gamma_1(\bar{x},t),t)$ has a unique solution $\bar{x} \in S$ where $S$ is a compact subset of $D_x$ for $t \in [0, t_1]$.}
\end{assumption}

\section{Preliminary Results}
\label{pre_results}
\subsection{Reduced System}

The reduced system is defined by setting $\epsilon = 0$ in the original system (\ref{sys_ori1})  - (\ref{sys_ori4}), which yields 
\begin{align}
\label{sys_red_deri1}		f_z(x,z,t,0) &= 0 ,\\
\label{sys_red_deri2}		B_1(x,z,t,0) \psi_x + B_2(x,z,t,\epsilon) \psi_z &= 0.	
\end{align}

Let $z = \gamma_1({x},t)$ be an isolated root of equation (\ref{sys_red_deri1}), which satisfies Assumption \ref{a3}. Then, we have that $\psi_z  = -B_2({x},\gamma_1({x},t),t,0)^{-1}B_1({x},\gamma_1({x},t),t,0) \psi_x $ is the unique solution of equation (\ref{sys_red_deri2}). Let $ \gamma_2({x}, t) = -B_2({x},\gamma_1({x},t),t,0)^{-1}B_1({x},\gamma_1({x},t),t,0)$. Then, substituting $z = \gamma_1({x},t)$ and $\psi_z = \gamma_2({x}, t)\psi_x$ in equations (\ref{sys_ori1}) and (\ref{sys_ori3}), we obtain the reduced system 
\begin{align}
\label{sys_red1}		\dot{\bar{x}} &= f_x(\bar{x},\gamma_1(\bar{x},t),t), \hspace{8.1em} \bar{x}(0) = x_0,\\
\label{sys_red2}		\hspace{-2em} \dot{\bar{\psi}}_x &= {A}(\bar{x},t)\bar{\psi}_x + \sigma_x(\bar{x},\gamma_1(\bar{x},t),t) \Gamma_x, \hspace{1em} \bar{\psi}_x(0) = {\psi_x}_0,
\end{align}
where\\
$ {A}(\bar{x},t) $ $=$ $ A_1(\bar{x},\gamma_1(\bar{x},t),t)\bar{\psi}_x $ $+$ $ A_2(\bar{x},\gamma_1(\bar{x},t),t)\gamma_2(\bar{x}, t)$.

Next, we derive the first and second moment dynamics of the variable $\bar{\psi}_x$ in the reduced system. To this end, we make the following claim:

\begin{claim}
The first and second moment dynamics for the variable $\bar{\psi}_x$ of the reduced system (\ref{sys_red1}) - (\ref{sys_red2}) can be written in the form 
\begin{align}
\label{mo_red1}		 \frac{d \mathbb{E}[\bar{\psi}_x]}{dt}  &= {A}(\bar{x},t) \mathbb{E}[\bar{\psi}_x] , \hspace{3.5em} \mathbb{E}[\bar{\psi}_x(0)] = {\psi_x}_0, \\
\notag	 \frac{d \mathbb{E}[\bar{\psi}_x\bar{\psi}_x^T] }{dt}  &= {A}(\bar{x},t) \mathbb{E}[\bar{\psi}_x\bar{\psi}_x^T]   +  \mathbb{E}[\bar{\psi}_x\bar{\psi}_x^T] {A}(\bar{x},t)^T + \sigma_x(\bar{x},\gamma_1(\bar{x},t),t) ,t)\sigma_x(\bar{x},\gamma_1(\bar{x},t),t),t)^T,  \\& \label{mo_red2}  \hspace{18.5em}   \mathbb{E}[\bar{\psi}_x(0)\bar{\psi}_x(0)^T] = {\psi_x}_0{\psi_x}_0^T.
\end{align}
\end{claim}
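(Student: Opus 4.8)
The plan is to exploit the fact that $\bar{x}$ is a purely deterministic quantity: by Assumption \ref{a4} the equation $\dot{\bar{x}} = f_x(\bar{x},\gamma_1(\bar{x},t),t)$ has a unique solution on $[0,t_1]$, so along this solution the matrix-valued functions $\bar{A}(t) := {A}(\bar{x}(t),t)$ and $\bar{\sigma}(t) := \sigma_x(\bar{x}(t),\gamma_1(\bar{x}(t),t),t)$ are deterministic and, by Assumptions \ref{a1}--\ref{a3}, continuous---hence bounded---on the compact interval $[0,t_1]$. Consequently (\ref{sys_red2}) is a linear stochastic differential equation with deterministic time-varying coefficients; interpreting it in It\^o form as $d\bar{\psi}_x = \bar{A}(t)\bar{\psi}_x\,dt + \bar{\sigma}(t)\,dW_x$, where $W_x$ denotes the $m_s$-dimensional Wiener process with $\Gamma_x = dW_x/dt$ in the formal sense, the solution $\bar{\psi}_x$ is a Gaussian (Ornstein--Uhlenbeck-type) process possessing finite second moments on $[0,t_1]$.

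For the first moment, I would integrate the It\^o equation from $0$ to $t$ and take expectations. Since $\bar{\sigma}$ is bounded and deterministic, the stochastic integral $\int_0^t \bar{\sigma}(s)\,dW_x(s)$ has zero mean, which leaves $\mathbb{E}[\bar{\psi}_x(t)] = {\psi_x}_0 + \int_0^t \bar{A}(s)\,\mathbb{E}[\bar{\psi}_x(s)]\,ds$. Differentiating in $t$---legitimate because the integrand is continuous---yields (\ref{mo_red1}); the initial condition follows since $\bar{\psi}_x(0) = {\psi_x}_0$ is deterministic.

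For the second moment, I would apply It\^o's formula to the map $\bar{\psi}_x \mapsto \bar{\psi}_x\bar{\psi}_x^T$, that is, componentwise to each $\bar{\psi}_{x,i}\bar{\psi}_{x,j}$, giving
\[
d\big(\bar{\psi}_x\bar{\psi}_x^T\big) = \Big(\bar{A}(t)\bar{\psi}_x\bar{\psi}_x^T + \bar{\psi}_x\bar{\psi}_x^T\bar{A}(t)^T + \bar{\sigma}(t)\bar{\sigma}(t)^T\Big)dt + \bar{\sigma}(t)\,dW_x\,\bar{\psi}_x^T + \bar{\psi}_x\,dW_x^T\,\bar{\sigma}(t)^T,
\]
where the drift term $\bar{\sigma}(t)\bar{\sigma}(t)^T\,dt$ arises from the quadratic variation $dW_x\,dW_x^T = I\,dt$. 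Integrating and taking expectations, the two stochastic-integral terms vanish because their integrands are adapted and square-integrable on $[0,t_1]$, leaving an integral equation for $\mathbb{E}[\bar{\psi}_x\bar{\psi}_x^T]$ whose time derivative is exactly (\ref{mo_red2}); again the initial condition is immediate from $\bar{\psi}_x(0) = {\psi_x}_0$.

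Since these are the classical mean and covariance propagation formulas for a linear SDE, I do not expect a deep obstacle. The only steps requiring care are (i) making precise the passage from the formal white-noise system (\ref{sys_red2}) to a well-posed It\^o SDE, and (ii) justifying that the It\^o integrals have zero expectation and that the resulting integral equations may be differentiated in $t$---both of which are handled by the boundedness of $\bar{A}$ and $\bar{\sigma}$ on $[0,t_1]$ together with the standard $L^2$ estimates for solutions of linear stochastic differential equations.
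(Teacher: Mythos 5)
Your proof is correct and takes essentially the same route as the paper: the paper simply invokes a cited proposition for the It\^o moment-propagation formulas of a linear SDE and then uses the determinism of $\bar{x}$ together with linearity of expectation to pull ${A}(\bar{x},t)$ and $\sigma_x(\bar{x},\gamma_1(\bar{x},t),t)$ out of the expectations, which is exactly the computation you carry out explicitly. Your explicit treatment of the It\^o integrals and their vanishing expectations fills in the details the paper outsources to the reference, so nothing is missing.
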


\begin{proof}
Similar to \cite{bence}, the first and second moment dynamics of $\bar{\psi}_x$ in (\ref{sys_red2}) can be written as 
\begin{align*}
	 \frac{d \mathbb{E}[\bar{\psi}_x]}{dt}  &= \mathbb{E}[{A}(\bar{x},t)\bar{\psi}_x],  \\
\notag	 \frac{d \mathbb{E}[\bar{\psi}_x\bar{\psi}_x^T] }{dt}  &= \mathbb{E}[{A}(\bar{x},t) \bar{\psi}_x\bar{\psi}_x^T] +  \mathbb{E}[\bar{\psi}_x(\bar{\psi}_x^T  {A}(\bar{x},t)^T)]  + \sigma_x(\bar{x},\gamma_1(\bar{x},t),t) ,t)\sigma_x(\bar{x},\gamma_1(\bar{x},t),t),t)^T.
\end{align*}
Since the dynamics of $\bar{x}$ given by (\ref{sys_red1}) are deterministic, using the linearity of the expectation operator we can write the moment dynamics of the reduced system as (\ref{mo_red1}) - (\ref{mo_red2}). 
\end{proof}




Next, we proceed to derive the moment dynamics for $\psi_x$ and $\psi_z$ in the original system (\ref{sys_ori1})  - (\ref{sys_ori4}) given by the following claim.

\begin{claim}
\label{Le_sp}
The first and second moment dynamics for the variables $\psi_x$ and $\psi_z$ of the original system (\ref{sys_ori1})  - (\ref{sys_ori4}) can  be written in the form
\begin{align}
\label{mo_ori1}	&\frac{d \mathbb{E}[\psi_{x}]}{dt}  =  A_1(x,z,t) \mathbb{E}[\psi_x] + A_2(x,z,t)\mathbb{E}[\psi_z], \\ 
\notag		&\frac{d\mathbb{E}[\psi_x \psi_x^T]}{dt}  = A_1(x,z,t) \mathbb{E}[\psi_x \psi_x^T] + A_2(x,z,t)\mathbb{E}[\psi_z\psi_x^T]  + \mathbb{E}[\psi_x \psi_x^T] A_1(x,z,t)^T \\&  + (\mathbb{E}[\psi_z\psi_x^T])^T A_2(x,z,t)^T  \label{mo_ori_mid1}  +  \sigma_x(x,z,t)\sigma_x(x,z,t)^T, \\ 
\label{mo_ori_mid2}	&\epsilon\frac{d \mathbb{E}[\psi_{z}]}{dt}  =  B_1(x,z,t,\epsilon) \mathbb{E}[\psi_x] + B_2(x,z,t,\epsilon)\mathbb{E}[\psi_z], \\
\notag		&\epsilon\frac{d\mathbb{E}[\psi_z \psi_x^T]}{dt}  =  \epsilon\mathbb{E}[\psi_z \psi_x^T] A_1(x,z,t)^T  \notag + \epsilon\mathbb{E}[\psi_z\psi_z^T] A_2(x,z,t)^T  + B_1(x,z,t,\epsilon) \mathbb{E}[\psi_x \psi_x^T] \\&   \label{mo_ori_mid3}  + B_2(x,z,t,\epsilon)\mathbb{E}[\psi_z \psi_x^T]  + \sigma_z(x,z,t,\epsilon)[\sigma_x(x,z,t) \ 0]^T, \\ 
\notag	&\epsilon\frac{d\mathbb{E}[\psi_z \psi_z^T]}{dt}  = B_1(x,z,t,\epsilon) \mathbb{E}[\psi_x \psi_z^T]   + B_2(x,z,t,\epsilon) \mathbb{E}[\psi_z\psi_z^T]  + \mathbb{E}[\psi_z \psi_x^T] B_1(x,z,t,\epsilon)^T \\&  \label{mo_ori2}  + \mathbb{E}[\psi_z\psi_z^T] B_2(x,z,t,\epsilon)^T  +  \frac{1}{\epsilon}\sigma_z(x,z,t,\epsilon)\sigma_z(x,z,t,\epsilon)^T, 
\end{align}
\normalsize
where $x$ and $z$ are the solutions of the equations (\ref{sys_ori1})  - (\ref{sys_ori2}), and the initial conditions are given by $ \mathbb{E}[{\psi}_x(0)] = {\psi_x}_0,$ $\mathbb{E}[\psi_x \psi_x^T(0)] = {\psi_x}_0{\psi_x}_0^T$, $\mathbb{E}[{\psi}_z(0)] = {\psi_z}_0$, $ \mathbb{E}[\psi_z\psi_x^T(0)] = {\psi_z}_0{\psi_x}_0^T,$ $\mathbb{E}[\psi_z\psi_z^T(0)] = {\psi_z}_0{\psi_z}_0^T$.
\end{claim}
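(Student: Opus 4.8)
The plan is to regard the fluctuation equations (\ref{sys_ori3})--(\ref{sys_ori4}) as a single linear stochastic differential equation for the stacked variable $\psi = [\psi_x^T,\ \psi_z^T]^T$, and then to invoke the standard first- and second-moment equations for linear SDEs, exactly as was done for the reduced system in (\ref{mo_red1})--(\ref{mo_red2}) (cf.\ \cite{bence}). Since $x(t)$ and $z(t)$ are the \emph{deterministic} solutions of (\ref{sys_ori1})--(\ref{sys_ori2}), the coefficient matrices $A_1,A_2,B_1,B_2,\sigma_x,\sigma_z$ evaluated along $(x(t),z(t),t)$ are deterministic and, by Assumptions \ref{a1}--\ref{a2}, continuous on $[0,t_1]$. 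Writing the driving noise as an $m$-dimensional Wiener process $W=[W_x^T,\ W_f^T]^T$ with $\Gamma_x=\dot W_x$ and $\Gamma_f=\dot W_f$, so that $\Gamma_z=[\dot W_x^T,\ \dot W_f^T]^T$, the system reads
\[
d\psi = \mathcal{A}(t)\,\psi\,dt + \mathcal{B}(t)\,dW, \qquad \mathcal{A}(t) = \begin{bmatrix} A_1 & A_2 \\ \tfrac{1}{\epsilon}B_1 & \tfrac{1}{\epsilon}B_2 \end{bmatrix},
\]
where $\mathcal{B}(t)$ is the $(n_s+n_f)\times m$ matrix whose top $n_s$ rows are $[\,\sigma_x\ \ 0\,]$ (so that $\psi_x$ is driven only by $\Gamma_x$) and whose bottom $n_f$ rows are $\tfrac{1}{\epsilon}\sigma_z$ (so that $\psi_z$ is driven by the full $\Gamma_z$).

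For a linear SDE of this form, the mean $\mu(t)=\mathbb{E}[\psi(t)]$ and the second-moment matrix $M(t)=\mathbb{E}[\psi(t)\psi(t)^T]$ satisfy the deterministic matrix ODEs $\dot\mu = \mathcal{A}\mu$ and $\dot M = \mathcal{A}M + M\mathcal{A}^T + \mathcal{B}\mathcal{B}^T$; these are obtained by applying It\^o's formula to $\psi$ and to $\psi\psi^T$ and taking expectations, the stochastic-integral terms being zero-mean because $\mathcal{A},\mathcal{B},\mu$ are bounded on $[0,t_1]$. I would then partition $\mu = [\mathbb{E}[\psi_x]^T\ \ \mathbb{E}[\psi_z]^T]^T$ and $M$ conformally into the blocks $\mathbb{E}[\psi_x\psi_x^T]$, $\mathbb{E}[\psi_z\psi_x^T]$, $\mathbb{E}[\psi_z\psi_z^T]$ (using $\mathbb{E}[\psi_x\psi_z^T] = (\mathbb{E}[\psi_z\psi_x^T])^T$), and multiply the rows associated with $\psi_z$ by $\epsilon$ to clear the $1/\epsilon$ factors in $\mathcal{A}$ and $\mathcal{B}$; this reproduces precisely (\ref{mo_ori1})--(\ref{mo_ori2}). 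The diffusion terms are the blocks of $\mathcal{B}\mathcal{B}^T$: the $(x,x)$ block is $\sigma_x\sigma_x^T$; the $(z,x)$ block is $\tfrac{1}{\epsilon}\sigma_z[\sigma_x\ 0]^T$, which becomes $\sigma_z[\sigma_x\ 0]^T$ after multiplication by $\epsilon$; and the $(z,z)$ block is $\tfrac{1}{\epsilon^2}\sigma_z\sigma_z^T$, which becomes $\tfrac{1}{\epsilon}\sigma_z\sigma_z^T$. By Assumption \ref{a2} all three of these products are well-defined and continuously differentiable, so the resulting moment ODEs admit unique solutions on $[0,t_1]$ with the stated initial conditions.

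The one step that needs care is the correlation of the noise across the two equations: because $\Gamma_x$ enters both (\ref{sys_ori3}) and (\ref{sys_ori4}), the off-diagonal block of $\mathcal{B}\mathcal{B}^T$ is \emph{not} zero but equals $\tfrac{1}{\epsilon}\sigma_z[\sigma_x\ 0]^T$, and one must keep the column ordering of $\sigma_z$ aligned with $\Gamma_z=[\Gamma_x^T,\ \Gamma_f^T]^T$ so that exactly the columns of $\sigma_z$ multiplying $\Gamma_x$ survive the product against $[\sigma_x\ 0]^T$. Once this bookkeeping is fixed, the remainder is the routine linear-Gaussian moment computation already used for the reduced system.
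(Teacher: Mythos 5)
Your proposal is correct and follows essentially the same route as the paper: the paper likewise rewrites (\ref{sys_ori3})--(\ref{sys_ori4}) with the common noise vector $\Gamma_z$ and the padded diffusion matrix $[\sigma_x \ 0]$, invokes the standard linear-SDE moment equations (Proposition III.1 of \cite{bence}) together with the determinism of $x,z$ and linearity of expectation, and then multiplies the $\psi_z$-rows by $\epsilon$ and eliminates $\mathbb{E}[\psi_x\psi_z^T]$ by symmetry. Your explicit attention to the cross-correlation block $\tfrac{1}{\epsilon}\sigma_z[\sigma_x\ 0]^T$ is exactly the bookkeeping the paper performs in its displayed second-moment matrix.
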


\begin{proof}
The equations (\ref{sys_ori3})  - (\ref{sys_ori4}) can be written in the form 
\begin{align*}
		\dot{\psi}_x &= A_1(x,z,t) \psi_x + A_2(x,z,t) \psi_z + [\sigma_x(x,z,t) \ 0] \Gamma_z,\\
		\epsilon \dot{\psi}_z &= B_1(x,z,t,\epsilon) \psi_x + B_2(x,z,t,\epsilon) \psi_z +\sigma_z(x,z,t,\epsilon) \Gamma_z,
\end{align*}
where $[\begin{array}{cc}\sigma_x(x,z,t) & 0\end{array}] \in \mathbb{R}^{n \times (m_s + m_f)}$. 
Then, using the fact that the $x$ and $z$ are deterministic and the linearity of the expectation operator, the dynamics for the first moments can be written as 
\begin{align}
\label{mo1}	\frac{d \mathbb{E}[\psi_{x}]}{dt}  &=  A_1(x,z,t) \mathbb{E}[\psi_x] + A_2(x,z,t)\mathbb{E}[\psi_z] ,\\
 			\frac{d \mathbb{E}[\psi_{z}]}{dt}  &=  \frac{1}{\epsilon}B_1(x,z,t,\epsilon) \mathbb{E}[\psi_x] + \frac{1}{\epsilon}B_2(x,z,t,\epsilon)\mathbb{E}[\psi_z].
\end{align}
Similarly, using Proposition III.1 in \cite{bence}, the second moment dynamics can be written as  
\begin{align}
\notag 	& \frac{d}{dt} \mathbb{E} \left[\begin{array}{cc} \psi_x\psi_x^T & \psi_x\psi_z^T \\  \psi_z \psi_x^T & \psi_z\psi_z^T\end{array}\right] =\\& 
\notag \bigg[ \arraycolsep=3.5pt \def\arraystretch{1.2} \begin{array}{c} \psi_x(A_1(x,z,t) \psi_x + A_2(x,z,t) \psi_z)^T \\ 
\psi_z(A_1(x,z,t) \psi_x + A_2(x,z,t) \psi_z)^T \end{array}   \arraycolsep=3.5pt \def\arraystretch{1.2} \begin{array}{c}
 \frac{1}{\epsilon} \psi_x(B_1(x,z,t,\epsilon) \psi_x + B_2(x,z,t,\epsilon) \psi_z)^T\\   \frac{1}{\epsilon}\psi_z(B_1(x,z,t,\epsilon) \psi_x + B_2(x,z,t,\epsilon) \psi_z)^T \end{array} \bigg] \\& \notag +  \bigg[\arraycolsep=3.5pt \def
\arraystretch{1.2}  \begin{array}{c} (A_1(x,z,t) \psi_x + A_2(x,z,t) \psi_z)\psi_x^T  \\ \frac{1}{\epsilon}(B_1(x,z,t,\epsilon) \psi_x + B_2(x,z,t,\epsilon) \psi_z)\psi_x^T  \end{array}    \arraycolsep=3.5pt \def\arraystretch{1.2} \begin{array}{c} (A_1(x,z,t) \psi_x + A_2(x,z,t) \psi_z)\psi_z^T \\  \frac{1}{\epsilon}(B_1(x,z,t,\epsilon) \psi_x + B_2(x,z,t,\epsilon) \psi_z)\psi_z^T \end{array} \bigg] \\& \label{sec_mo} 
+ \bigg[ \arraycolsep=3.5pt \def\arraystretch{1.2} \begin{array}{c} \sigma_x(x,z,t)\sigma_x(x,z,t)^T \\  \frac{1}{{\epsilon}}\sigma_z(x,z,t,\epsilon)[\begin{array}{cc}\sigma_x(x,z,t) & 0\end{array}]^T\end{array} 
\arraycolsep=3.5pt \def\arraystretch{1.2}  \begin{array}{c} \frac{1}{{\epsilon}}[\begin{array}{cc}\sigma_x(x,z,t) & 0\end{array}]\sigma_z(x,z,t,\epsilon)^T \\  \frac{1}{\epsilon^2}\sigma_z(x,z,t,\epsilon)\sigma_z(x,z,t,\epsilon)^T \end{array}\bigg]. 
\end{align}
Employing the linearity of the expectation operator, we can sum the corresponding entries of the matrices in equation (\ref{sec_mo}), and multiply by $\epsilon$ to write the moment equations (\ref{mo1}) - (\ref{sec_mo}) in the form of (\ref{mo_ori1}) - (\ref{mo_ori2}). Note that, since $\mathbb{E}[\psi_x\psi_z^T] = (\mathbb{E}[\psi_z\psi_x^T])^T$, we have eliminated the dynamics of the variable  $\mathbb{E}[\psi_x\psi_z^T]$.
\end{proof}



\begin{claim}
\label{claim_mo_ori}
Setting $\epsilon = 0$ in the system of moment dynamics (\ref{mo_ori1}) - (\ref{mo_ori2}) and the dynamics of $x$ and $z$ given by (\ref{sys_ori1}) - (\ref{sys_ori2}), yields the moment dynamics of the reduced system (\ref{mo_red1}) - (\ref{mo_red2}) where the dynamics of $\bar{x}$ are given by (\ref{sys_red1}). 

\end{claim}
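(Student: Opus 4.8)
\noindent\textit{Proof outline.} The plan is a direct substitution argument that exploits the fact that, once the original moment equations of Claim \ref{Le_sp} are appended to the state equations (\ref{sys_ori1})--(\ref{sys_ori2}), the whole collection is a singularly perturbed system in standard form. Concretely, (\ref{sys_ori1}), (\ref{mo_ori1}) and (\ref{mo_ori_mid1}) (for $x$, $\mathbb{E}[\psi_x]$, $\mathbb{E}[\psi_x\psi_x^T]$) play the role of the slow equations, while (\ref{sys_ori2}), (\ref{mo_ori_mid2}), (\ref{mo_ori_mid3}) and (\ref{mo_ori2}) (for $z$, $\mathbb{E}[\psi_z]$, $\mathbb{E}[\psi_z\psi_x^T]$, $\mathbb{E}[\psi_z\psi_z^T]$) carry $\epsilon$ on the left and degenerate to algebraic constraints at $\epsilon = 0$. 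So the proof has two steps: (i) solve the $\epsilon = 0$ constraints for the fast moments in terms of the slow ones, and (ii) substitute those expressions back into the slow equations and match term by term with (\ref{sys_red1}), (\ref{mo_red1}), (\ref{mo_red2}).

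For step (i): setting $\epsilon = 0$ in (\ref{sys_ori2}) gives $f_z(x,z,t,0) = 0$, whose isolated root is $z = \gamma_1(x,t)$ by Assumption \ref{a3}; inserting this into (\ref{sys_ori1}) yields $\dot x = f_x(x,\gamma_1(x,t),t)$, which is (\ref{sys_red1}) with $\bar x = x$. Setting $\epsilon = 0$ in (\ref{mo_ori_mid2}) gives $B_1(x,\gamma_1(x,t),t,0)\,\mathbb{E}[\psi_x] + B_2(x,\gamma_1(x,t),t,0)\,\mathbb{E}[\psi_z] = 0$; since $B_2(x,\gamma_1(x,t),t,0) = \frac{\partial f_z(x,z,t,0)}{\partial z}\big|_{z = \gamma_1(x,t)}$ is Hurwitz, hence invertible, by Assumption \ref{a3}, this is uniquely solved by $\mathbb{E}[\psi_z] = \gamma_2(x,t)\,\mathbb{E}[\psi_x]$ with $\gamma_2$ as defined just before (\ref{sys_red1}). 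Setting $\epsilon = 0$ in (\ref{mo_ori_mid3}) and using $\sigma_z(x,z,t,0) = 0$ from Assumption \ref{a2}, the same inversion gives $\mathbb{E}[\psi_z\psi_x^T] = \gamma_2(x,t)\,\mathbb{E}[\psi_x\psi_x^T]$. The remaining constraint (\ref{mo_ori2}) at $\epsilon = 0$, with the term $\frac{1}{\epsilon}\sigma_z\sigma_z^T$ read as the limit $\sigma(x,z,t)$ of Assumption \ref{a2}, is a Lyapunov equation pinning down $\mathbb{E}[\psi_z\psi_z^T]$; it plays no role in what follows.

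For step (ii): substituting $z = \gamma_1(\bar x,t)$ and $\mathbb{E}[\psi_z] = \gamma_2(\bar x,t)\mathbb{E}[\psi_x]$ into (\ref{mo_ori1}) gives $\frac{d\mathbb{E}[\psi_x]}{dt} = \big(A_1(\bar x,\gamma_1(\bar x,t),t) + A_2(\bar x,\gamma_1(\bar x,t),t)\gamma_2(\bar x,t)\big)\mathbb{E}[\psi_x] = A(\bar x,t)\mathbb{E}[\psi_x]$, which is (\ref{mo_red1}); here $A(\bar x,t)$ is the matrix displayed after (\ref{sys_red2}) (the stray factor $\bar\psi_x$ in that display is a typographical slip, the quantity being the matrix $A_1 + A_2\gamma_2$). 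Substituting $z = \gamma_1(\bar x,t)$ and $\mathbb{E}[\psi_z\psi_x^T] = \gamma_2(\bar x,t)\mathbb{E}[\psi_x\psi_x^T]$ into (\ref{mo_ori_mid1}), and using symmetry of $\mathbb{E}[\psi_x\psi_x^T]$ so that $\big(\mathbb{E}[\psi_z\psi_x^T]\big)^T = \mathbb{E}[\psi_x\psi_x^T]\gamma_2(\bar x,t)^T$, the four $A$-terms collapse to $A(\bar x,t)\mathbb{E}[\psi_x\psi_x^T] + \mathbb{E}[\psi_x\psi_x^T]A(\bar x,t)^T$ and the forcing term is $\sigma_x(\bar x,\gamma_1(\bar x,t),t)\sigma_x(\bar x,\gamma_1(\bar x,t),t)^T$, giving (\ref{mo_red2}). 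Finally, the initial data $\mathbb{E}[\psi_x(0)] = {\psi_x}_0$ and $\mathbb{E}[\psi_x\psi_x^T(0)] = {\psi_x}_0{\psi_x}_0^T$ are exactly those of (\ref{mo_red1})--(\ref{mo_red2}).

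I expect no genuine difficulty here: the computation is essentially bookkeeping. The one point deserving care in the write-up is the algebraic elimination of step (i) — that the $\epsilon = 0$ forms of (\ref{mo_ori_mid2}) and (\ref{mo_ori_mid3}) are \emph{uniquely} solvable for $\mathbb{E}[\psi_z]$ and $\mathbb{E}[\psi_z\psi_x^T]$ — which is precisely guaranteed by the Hurwitz (hence nonsingularity) property in Assumption \ref{a3}, together with $\sigma_z(\cdot,0) = 0$ from Assumption \ref{a2} ensuring that no residual diffusion term survives in the cross-moment constraint. The symmetry step that lets the transpose term in (\ref{mo_ori_mid1}) combine cleanly is the other place to be explicit.
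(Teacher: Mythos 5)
Your proposal follows the paper's proof essentially verbatim: set $\epsilon=0$ to obtain the algebraic constraints, solve them uniquely for $\mathbb{E}[\psi_z]$ and $\mathbb{E}[\psi_z\psi_x^T]$ via the invertibility of $B_2$ guaranteed by Assumption \ref{a3}, and substitute back into (\ref{mo_ori1}) and (\ref{mo_ori_mid1}) to recover (\ref{mo_red1})--(\ref{mo_red2}). Your added remarks --- that $\sigma_z(\cdot,0)=0$ from Assumption \ref{a2} is what removes the diffusion term in the cross-moment constraint, and that the $\bar\psi_x$ factor in the displayed definition of $A(\bar x,t)$ is a typographical slip --- are correct and make explicit two points the paper leaves implicit, but they do not change the argument.
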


\begin{proof}
Setting $\epsilon = 0$ in equations (\ref{sys_ori1}) - (\ref{sys_ori2}) and (\ref{mo_ori_mid2}) - (\ref{mo_ori_mid3}), yields 
\begin{align}
\label{z_0} 	0 &= f_z(x,z,t,0),\\
\label{psi_x0}	0 &=  B_1(x,z,t,0) \mathbb{E}[\psi_x] + B_2(x,z,t,0)\mathbb{E}[\psi_z] , \\
\label{psi_z0}	0 &= B_1(x,z,t,0) \mathbb{E}[\psi_x \psi_x^T] + B_2(x,z,t,0)\mathbb{E}[\psi_z \psi_x^T].
\end{align}
By definition of the reduced system, we have that $z = \gamma_1(x,t)$ is an isolated root for equation (\ref{z_0}). Then, under  Assumption \ref{a3}, we have that the unique solutions for the equations (\ref{psi_x0}) and (\ref{psi_z0})
are given by 
\begin{align}
\notag	\mathbb{E}[\psi_z]  &= -B_2(x, \gamma_1(x,t),t,0)^{-1}(B_1(x, \gamma_1(x,t),t,0)\mathbb{E}[\psi_x]) \\&  \label{sol_psi_z} =  \gamma_2(x,t)\mathbb{E}[\psi_x], \\
\notag	\mathbb{E}[\psi_z \psi_x^T] &= -B_2(x, \gamma_1(x,t),t,0)^{-1}(B_1(x, \gamma_1(x,t),t,0) \mathbb{E}[\psi_x \psi_x^T])  \\& \label{sol_psi_zx} = \gamma_2(x,t)\mathbb{E}[\psi_x \psi_x^T].
\end{align}
Substituting $z = \gamma_1({x},t)$ and equations (\ref{sol_psi_z}) - (\ref{sol_psi_zx}), in (\ref{sys_ori1}) and (\ref{mo_ori1}) - (\ref{mo_ori2}) results in
\begin{align}
	\label{sys_ori1_pr}		\dot{x} &= f_x(x,\gamma_1(x,t),t),\\
\label{mo_ori1_pr}		\frac{d \mathbb{E}[\psi_{x}]}{dt}  &=  A_1(x,\gamma_1({x},t),t) \mathbb{E}[\psi_x]  + A_2(x,\gamma_1({x},t),t)\gamma_2(x,t)\mathbb{E}[\psi_x], \\
\notag		\frac{d\mathbb{E}[\psi_x \psi_x^T]}{dt}  &= A_1(x,\gamma_1({x},t),t) \mathbb{E}[\psi_x \psi_x^T]  + A_2(x,\gamma_1({x},t),t) \gamma_2(x,t)\mathbb{E}[\psi_x \psi_x^T] \\& + \mathbb{E}[\psi_x \psi_x^T] A_1(x,z,t)^T + (\gamma_2(x,t)\mathbb{E}[\psi_x \psi_x^T])^T A_2(x,\gamma_1({x},t),t)^T \\& \label{mo_ori_mid1_pr}  +  \sigma_x(x,\gamma_1({x},t),t)\sigma_x(x,\gamma_1({x},t),t)^T.
\end{align}
It follows that equation (\ref{sys_ori1_pr}) is equivalent to the reduced system given by (\ref{sys_red1}) and since we have that $ {A}({x},t) = A_1({x},\gamma_1({x},t),t)\bar{\psi}_x + A_2({x},\gamma_1({x},t),t)\gamma_2({x}, t)$, the system (\ref{mo_ori1_pr}) - (\ref{mo_ori_mid1_pr}) is equivalent to the moment dynamics of the reduced system given by (\ref{mo_red1}) - (\ref{mo_red2}).
\end{proof}

\section{Main Results}
\label{main_results}
\begin{lemma}
Consider the original system in (\ref{sys_ori1}) - (\ref{sys_ori4}), the reduced system in (\ref{sys_red1}) - (\ref{sys_red2}), and the moment dynamics for the original and reduced systems in (\ref{mo_ori1}) - (\ref{mo_ori2}), (\ref{mo_red1}) - (\ref{mo_red2}) respectively. We have that, under Assumptions 1 - 3,  the commutative diagram in Fig. \ref{comm} holds.
\begin{figure*}[t!] 
\hspace{0.5em}
\vspace{-1em}
\def\svgwidth{450pt} 
\footnotesize
\centering
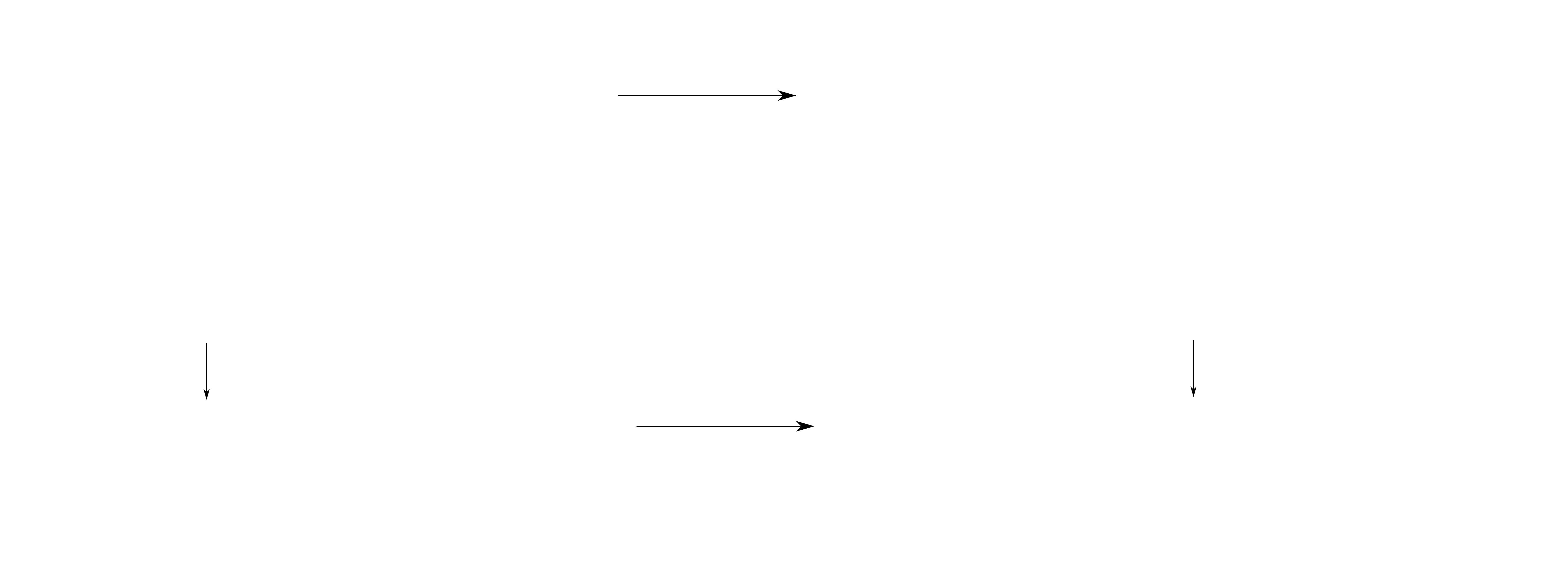\vspace{14em}
\vspace{-8em}
\caption{\small{Commutative Diagram.}}
\label{comm}
\end{figure*}  
\end{lemma}

\begin{proof}
The proof follows from Claim 1, Claim 2 and Claim 3.
\end{proof}

\begin{theorem}
Consider the original system (\ref{sys_ori1}) - (\ref{sys_ori4}), the reduced system in (\ref{sys_red1}) -  (\ref{sys_red2}) and the moment dynamics for the original and reduced systems in (\ref{mo_ori1}) - (\ref{mo_ori2}), (\ref{mo_red1}) - (\ref{mo_red2}) respectively. Then, under Assumptions \ref{a1} - \ref{a4}, there exists  $\epsilon^* \ge 0$ such that for $0 < \epsilon < \epsilon^*$, we have 
\begin{align}
\label{thm1_res1}	&\| x(t) - \bar{x}(t) \|= O(\epsilon),  \ t \in [0, t_1], \\
\label{thm1_res2}	&\| \mathbb{E}[\psi_x(t)] - \mathbb{E}[\bar{\psi}_x(t)] \| = O(\epsilon), \\
\label{thm1_res2}	&\|  \mathbb{E}[{\psi}_x(t){\psi}_x(t)^T]  - \mathbb{E}[\bar{\psi}_x(t)\bar{\psi}_x(t)^T] \| = O(\epsilon).
\end{align}
\end{theorem}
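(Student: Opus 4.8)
The plan is to view the entire collection of deterministic equations \eqref{sys_ori1}--\eqref{sys_ori2} together with the moment equations \eqref{mo_ori1}--\eqref{mo_ori2} as one finite-dimensional system in standard singular-perturbation form, and to apply the time-varying version of Tikhonov's theorem on the finite interval $[0,t_1]$ (as in \cite{khalil}). Concretely, I would collect the slow variables into $\chi := (x,\ \mathbb{E}[\psi_x],\ \mathbb{E}[\psi_x\psi_x^T])$ and the fast variables into $\eta := (z,\ \mathbb{E}[\psi_z],\ \mathbb{E}[\psi_z\psi_x^T],\ \mathbb{E}[\psi_z\psi_z^T])$, vectorizing the symmetric blocks, so that Claim~\ref{Le_sp} says exactly that $\dot\chi = F(\chi,\eta,\epsilon)$, $\epsilon\dot\eta = G(\chi,\eta,\epsilon)$ for a smooth $F$ (built from \eqref{sys_ori1}, \eqref{mo_ori1}, \eqref{mo_ori_mid1}) and $G$ (built from \eqref{sys_ori2}, \eqref{mo_ori_mid2}, \eqref{mo_ori_mid3}, \eqref{mo_ori2}). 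The one point that needs attention before the problem is literally in standard form is the term $\tfrac1\epsilon\,\sigma_z\sigma_z^T$ in \eqref{mo_ori2}: Assumption~\ref{a2} is precisely what makes $G$ extend continuously (and $C^1$) to $\epsilon=0$, with $\tfrac1\epsilon\sigma_z\sigma_z^T\to\sigma(x,z,t)$, while $\sigma_z(\cdot,0)=0$ removes every remaining $\sigma_z$-term; the regularity of $F,G$ and of their Jacobians then follows from Assumptions~\ref{a1}--\ref{a2}. Since all functions are affine-linear in the moment coordinates, there is no difficulty extending the relevant domain in those coordinates.

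Next I would identify the slow manifold by solving $G(\chi,\eta,0)=0$. The $z$-block gives the isolated root $z=\gamma_1(x,t)$ of Assumption~\ref{a3}; since $B_2(x,\gamma_1(x,t),t,0)=\tfrac{\partial f_z}{\partial z}\big|_{z=\gamma_1}$ is Hurwitz, hence invertible, the $\mathbb{E}[\psi_z]$- and $\mathbb{E}[\psi_z\psi_x^T]$-blocks give the unique solutions $\gamma_2(x,t)\mathbb{E}[\psi_x]$ and $\gamma_2(x,t)\mathbb{E}[\psi_x\psi_x^T]$, and the $\mathbb{E}[\psi_z\psi_z^T]$-block, after substitution, reduces to a Lyapunov equation $B_2\Pi+\Pi B_2^T = -Q(x,t)$ with the unique solution $\Pi(x,t)$. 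Hence $\eta=h(\chi):=(\gamma_1,\ \gamma_2\mathbb{E}[\psi_x],\ \gamma_2\mathbb{E}[\psi_x\psi_x^T],\ \Pi)$ is an isolated root, $C^1$ in $\chi$ by Assumptions~\ref{a1} and \ref{a3} and smoothness of the Lyapunov solution operator. By Claim~\ref{claim_mo_ori}, $\dot\chi=F(\chi,h(\chi),0)$ is exactly the reduced system \eqref{sys_red1} together with the reduced moment dynamics \eqref{mo_red1}--\eqref{mo_red2}, so $\bar\chi=(\bar x,\ \mathbb{E}[\bar\psi_x],\ \mathbb{E}[\bar\psi_x\bar\psi_x^T])$; Assumption~\ref{a4} (plus linearity of \eqref{mo_red1}--\eqref{mo_red2} in the moments, which forces boundedness on a finite interval once $\bar x$ is bounded) guarantees that $\bar\chi$ exists and stays in a compact subset of the domain on $[0,t_1]$, as Tikhonov's theorem requires.

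It then remains to check exponential stability of the boundary-layer system $\tfrac{d\tilde\eta}{d\tau}=G(\chi,\tilde\eta+h(\chi),0)$ with $\chi$ frozen. This system is block lower-triangular: the $z$-equation $\tfrac{dz}{d\tau}=f_z(x,z,t,0)$ has equilibrium $\gamma_1(x,t)$ with Jacobian Hurwitz uniformly in $(x,t)$ (Assumption~\ref{a3}); once $z=\gamma_1$, the deviations of $\mathbb{E}[\psi_z]$ and of $\mathbb{E}[\psi_z\psi_x^T]$ satisfy linear equations with system matrix $B_2(x,\gamma_1,t,0)$, and the deviation of $\mathbb{E}[\psi_z\psi_z^T]$ satisfies the Lyapunov flow $\dot P=B_2P+PB_2^T$, all exponentially stable because $B_2$ is Hurwitz. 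A cascade of exponentially stable subsystems is exponentially stable uniformly in the frozen $\chi$, and the initial condition lies in the region of attraction since $z_0$ does (Assumption~\ref{a3}) and the moment sub-blocks are globally attractive (being affine-linear). Tikhonov's theorem then gives $\|\chi(t)-\bar\chi(t)\|=O(\epsilon)$ uniformly on $[0,t_1]$ for all $0<\epsilon<\epsilon^*$, and reading off the three components of $\chi$ yields \eqref{thm1_res1} and the two moment estimates.

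The routine part is the bookkeeping above and invoking the tailored Assumptions~\ref{a1}--\ref{a3}. The main obstacle is the boundary-layer analysis: one has to see that the augmented fast subsystem is exponentially stable \emph{as a whole}, which rests on the observations that $B_2$ evaluated on the slow manifold equals $\tfrac{\partial f_z}{\partial z}\big|_{\gamma_1}$ (Hurwitz by Assumption~\ref{a3}) and that the fast moment equations are linear/Lyapunov in the fast moments, so their cascade with the nonlinear $z$-layer is again exponentially stable. A secondary hurdle, fully absorbed by Assumption~\ref{a2}, is making sense of the $\tfrac1\epsilon\sigma_z\sigma_z^T$ term at $\epsilon=0$ so that the system is genuinely in standard singular-perturbation form.
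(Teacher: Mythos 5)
Your proposal follows essentially the same route as the paper: both recast the deterministic equations together with the first- and second-moment equations as a single singularly perturbed ODE system, identify the quasi-steady state $z=\gamma_1(x,t)$, $\mathbb{E}[\psi_z]=\gamma_2\mathbb{E}[\psi_x]$, $\mathbb{E}[\psi_z\psi_x^T]=\gamma_2\mathbb{E}[\psi_x\psi_x^T]$, establish exponential stability of the block-triangular boundary layer from the Hurwitz matrices $\tfrac{\partial f_z}{\partial z}\big|_{z=\gamma_1}$ and $B_2(x,\gamma_1,t,0)$ together with Assumption 2 to make sense of $\tfrac{1}{\epsilon}\sigma_z\sigma_z^T$ at $\epsilon=0$, and then invoke Tikhonov's theorem on $[0,t_1]$. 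The only difference is that you handle the $\mathbb{E}[\psi_z\psi_z^T]$ block explicitly via a Lyapunov equation and Lyapunov flow, whereas the paper introduces boundary-layer variables only for $z$, $\mathbb{E}[\psi_z]$ and $\mathbb{E}[\psi_z\psi_x^T]$ (the remaining block enters the slow and boundary-layer dynamics only through $O(\epsilon)$ terms); this is a slightly more complete bookkeeping of the same argument.
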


\begin{proof}
From Lemma 1, we see that setting $\epsilon = 0$ in the moment dynamics of the original system  (\ref{mo_ori1}) - (\ref{mo_ori2}) and in the dynamics of $x$ and $z$ given by (\ref{sys_ori1}) - (\ref{sys_ori2}), yields the moment dynamics of the reduced system  (\ref{mo_red1})  - (\ref{mo_red2}) where the dynamics of $\bar{x}$ are given by (\ref{sys_red1}). Therefore to prove Theorem 1, we apply Tikhonov's theorem \cite{khalil} to the system of moment dynamics of the original system given by (\ref{mo_ori1}) - (\ref{mo_ori2}) together with the dynamics of $x$ and $z$ given by (\ref{sys_ori1}) - (\ref{sys_ori2}). In order to apply Tikhonov's theorem, we first prove that the assumptions of the Tikhonov's theorem are satisfied. To this end, let us define the boundary layer variables 
\begin{align}
	 	b_1 &= z - \gamma_1(x,t), \\
		b_2 &= \mathbb{E}[\psi_z] -  \gamma_2(x,t)\mathbb{E}[\psi_x], \\
 		b_3 &= \mathbb{E}[\psi_z \psi_x^T] -  \gamma_2(x,t)\mathbb{E}[\psi_x \psi_x^T]. 
\end{align}
The dynamics of the boundary layer variables are given by 
\begin{align*}
	\frac{db_1}{dt} &=  \frac{dz}{dt} - \frac{d\gamma_1(x,t)}{dt},\\
	\frac{db_{2}}{dt} &= \frac{d\mathbb{E}[{\psi_z}]}{dt} - \frac{d\gamma_2(x,t)\mathbb{E}[\psi_x]}{dt},\\
	\frac{db_{3}}{dt} &= \frac{d\mathbb{E}[{\psi_z}\psi_x]}{dt} - \frac{d\gamma_2(x,t)\mathbb{E}[\psi_x \psi_x^T]}{dt}.
\end{align*}
Denote by $\tau = t/\epsilon$ the time variable in the fast time-scale. Then, expanding using the chain rule, we have
\begin{align*}
	\frac{db_1}{d\tau} &=  \epsilon\frac{dz}{dt} -  \epsilon\frac{\partial \gamma_1(x,t)}{\partial t} -  \epsilon\frac{\partial \gamma_1(x,t)}{\partial x} \frac{dx}{dt},\\
	\frac{db_{2}}{d\tau} &= \epsilon\frac{d\mathbb{E}[{\psi_z}]}{dt} -   \epsilon\mathbb{E}[\psi_x] \frac{\partial \gamma_2(x,t)}{\partial t} -   \epsilon\mathbb{E}[\psi_x ] \frac{\partial \gamma_2(x,t)}{\partial x}\frac{dx}{dt}  -   \epsilon \gamma_2(x,t) \frac{d \mathbb{E}[\psi_x ] }{d t} ,\\
	\frac{db_{3}}{d\tau} &= \epsilon\frac{d\mathbb{E}[{\psi_z}\psi_x^T]}{dt} -  \epsilon \mathbb{E}[\psi_x \psi_x^T] \frac{\partial \gamma_2(x,t)}{\partial t}  - \epsilon\mathbb{E}[\psi_x \psi_x^T] \frac{\partial \gamma_2(x,t)}{\partial x}\frac{dx}{dt} -  \epsilon\gamma_2(x,t) \frac{d \mathbb{E}[\psi_x \psi_x^T] }{d t} .	
\end{align*}
Substituting from equations (\ref{sys_ori2}), (\ref{mo_ori_mid2}) and (\ref{mo_ori2}) yields 
\begin{align}
\label{boundary_proof1}	\frac{db_1}{d\tau} &=  f_z(x,z,t,\epsilon) -  \epsilon\frac{\partial \gamma_1(x,t)}{\partial t} -  \epsilon\frac{\partial \gamma_1(x,t)}{\partial x} \frac{dx}{dt},\\
\notag	\frac{db_{2}}{d\tau} &=  B_1(x,z,t,\epsilon) \mathbb{E}[\psi_x] + B_2(x,z,t,\epsilon)\mathbb{E}[\psi_z]  \\& \notag -  \epsilon \mathbb{E}[\psi_x] \frac{\partial \gamma_2(x,t)}{\partial t} -  \epsilon\mathbb{E}[\psi_x ] \frac{\partial \gamma_2(x,t)}{\partial x}\frac{dx}{dt}  -   \epsilon \gamma_2(x,t) \frac{d \mathbb{E}[\psi_x ] }{d t} ,\\
\notag	\frac{db_{3}}{d\tau} &= \epsilon\mathbb{E}[\psi_z \psi_x^T] A_1(x,z,t)^T + \epsilon\mathbb{E}[\psi_z\psi_z^T] A_2(x,z,t)^T  + B_1(x,z,t,\epsilon) \mathbb{E}[\psi_x \psi_x^T] \\& \notag + B_2(x,z,t,\epsilon)\mathbb{E}[\psi_z \psi_x^T]  + \sigma_z(x,z,t, \epsilon)[\sigma_x(x,z,t) \ 0]^T -  \epsilon\mathbb{E}[\psi_x \psi_x^T] \frac{\partial \gamma_2(x,t)}{\partial t} \\&\label{boundary_proof2} - \epsilon \mathbb{E}[\psi_x \psi_x^T] \frac{\partial \gamma_2(x,t)}{\partial x}\frac{dx}{dt}  -  \epsilon \gamma_2(x,t) \frac{d \mathbb{E}[\psi_x \psi_x^T] }{d t}.	
\end{align}
where we take $z = b_1 + \gamma_1(x,t)$ and $\mathbb{E}[\psi_z] = b_2 +  \gamma_2(x,t)\mathbb{E}[\psi_x]$, and $\mathbb{E}[\psi_z \psi_x^T] = b_3 + \gamma_2(x,t) \mathbb{E}[\psi_x \psi_x^T]$. Since, from Assumption $\ref{a3}$, $\gamma_1(x,t)$ is a continuously differentiable functions in its arguments, we have that $\frac{\partial \gamma_1(x,t)}{\partial t}$, $\frac{\partial \gamma_1(x,t)}{dx}$ are bounded in a finite time interval $t \in [0, t_1]$. Since $\gamma_2({x}, t) = -B_2({x},\gamma_1({x},t),t,0)^{-1}B_1({x},\gamma_1({x},t),t,0)$, and $B_1$ and $B_2$ are continuously differentiable from Assumption \ref{a1}, we have that $\frac{\partial \gamma_{2}(x, t)}{\partial x}$ and $\frac{\partial  \gamma_{2}(x, t)}{\partial t}$  are bounded in a finite time interval $t \in [0, t_1]$.
Then, the boundary layer system obtained by setting $\epsilon = 0$ in (\ref{boundary_proof1}) - (\ref{boundary_proof2}) is given by 
\begin{align}
\label{boundary_1}	\frac{db_1}{d\tau} &= f_z(x, b_1 + \gamma_1(x,t), t, 0),\\
\notag \frac{db_{2}}{d\tau} &=  B_1(x, b_1 + \gamma_1(x,t),t,0) \mathbb{E}[\psi_x] + B_2(x, b_1 + \gamma_1(x,t),t,0)( b_2 +  \gamma_2(x,t)\mathbb{E}[\psi_x])  \\& \label{boundary_3} =: g_1(b_1,b_2, x,t),\\
\notag \frac{db_{3}}{d\tau} &=  B_1(x, b_1 + \gamma_1(x,t),t,0) \mathbb{E}[\psi_x \psi_x^T] + B_2(x, b_1 + \gamma_1(x,t),t,0)(b_3 + \gamma_2(x,t) \mathbb{E}[\psi_x \psi_x^T]) \\& \label{boundary_2} =:  g_2(b_1,b_3, x,t).
\end{align}
To prove that the origin of the boundary layer system is exponentially stable, we consider the dynamics of the vectors $e_i = [b_1, b_2, b_{3i}]$ where $b_{3i}$ represent the columns of the matrix $b_3$ for $i = 1, \ldots, n$. Similarly, denote the columns of the matrix $g_{2}(b_1,b_3, x,t)$ by $g_{2i}(b_1,b_3, x,t)$ representing the dynamics for each $b_{3i}$.  Linearizing the system (\ref{boundary_1}) - (\ref{boundary_2}) around the origin,  we obtain the dynamics for $\tilde{e}_i = e_i - 0$ as

\begin{align}
	\frac{d\tilde{e}_i}{d\tau} &= \left[\begin{array}{ccc} J_{11} & 0 & 0 \\  J_{21} & J_{22} & 0\\ J_{31} & 0 & J_{33}\end{array} \right] \tilde{e}_i, \  \qquad  i = \{ 1, \ldots, n \},
\end{align}
where $J_{11} = \frac{\partial f_z(x, b_1 + \gamma_1(x,t), t, 0)}{\partial b_1}\big|_{b_1=0}$, $J_{21} = \frac{\partial g_1(b_1,b_2,x,t)}{\partial b_1}\big|_{e_i=0} $, $J_{22} = B_2(x, b_1 + \gamma_1(x,t) , t, 0)\big|_{b_1=0} $, $J_{31} = \frac{\partial g_{2i}(b_1,b_3, x,t)}{\partial b_1}\big|_{e_i=0}$, and $J_{33} = B_2(x, b_1 + \gamma_1(x,t) , t, 0)\big|_{b_1=0}$.
Since the eigenvalues of a block triangular matrix are given by the union of eigenvalues of the diagonal blocks, we consider the eigenvalues of $\frac{\partial f_z(x, b_1 + \gamma_1(x,t), t, 0)}{\partial b_1}\big|_{b_1=0}$ and $ B_2(x, b_1 + \gamma_1(x,t) , t, 0)\big|_{b_1=0}$.  Under Assumption \ref{a3}, we have that the matrix $\frac{\partial f_z(x, b_1 + \gamma_1(x,t), t, 0)}{\partial b_1}\big|_{b_1=0} = \frac{\partial f_z(x, z, t, 0)}{\partial z} \frac{d z}{d b_1}\big|_{z = \gamma_1(x,t)} = \frac{\partial f_z(x, z, t, 0)}{\partial z} \big|_{z = \gamma_1(x,t)} $ is Hurwitz. From the definition of the original system (\ref{sys_ori1}) - (\ref{sys_ori4}), we have $B_2(x, z , t, \epsilon) = \frac{\partial f_z(x, z, t, \epsilon)}{\partial z} $. Therefore, $ B_2(x, b_1 + \gamma_1(x,t) , t, 0)\big|_{b_1=0} = \frac{\partial f_z(x, z, t, 0)}{\partial z} \big|_{z=\gamma_1(x,t)}$, which is Hurwitz under Assumption \ref{a3}. Thus, the boundary layer system is exponentially stable. 
 
From Assumptions \ref{a1} and \ref{a2} we have that the functions $f_x(x,z,t)$,  $f_z(x,z,t,\epsilon)$, $A_1(x,z,t)$, $A_2(x,z,t)$, $B_1(x,z,t,\epsilon)$, $B_2(x,z,t,\epsilon)$,  $\sigma_x(x, z, t)\sigma_x(x, z, t)^T$, $\sigma_z(x, z, t, \epsilon)[\sigma_x(x, z, t) \ 0]^T$ and $\sigma_z(x, z, t, \epsilon)\sigma_z(x, z, t, \epsilon)^T$ and their first partial derivatives are continuously differentiable. From Assumption \ref{a1} we have that the $\frac{\partial  f_z(x,z,t,0)}{\partial z}$, $\frac{\partial  B_1(x,z,t,0)}{\partial z}$, $\frac{\partial  B_2(x,z,t,0)}{\partial z}$ have continuous first partial derivatives with respect to their arguments. From Assumptions \ref{a1} and \ref{a3} we have that the $\gamma_1(x,t)$, $\gamma_2(x,t)\mathbb{E}[\psi_x]$, $\gamma_2(x,t)\mathbb{E}[\psi_x \psi_x^T]$ have continuous first partial derivatives with respect to their arguments.  From Assumption \ref{a4} we have that the reduced system (\ref{sys_red1}) has a unique bounded solution for $t \in [0, t_1]$.  Since the moment equations (\ref{mo_red1})  - (\ref{mo_red2}) are linear in $\mathbb{E}[\bar{\psi}_x]$ and $\mathbb{E}[\bar{\psi}_x\bar{\psi}_x^T]$ there exists a unique solution to (\ref{mo_red1})  - (\ref{mo_red2}) for $t \in [0, t_1]$.  From Assumption \ref{a3} we have that the initial condition $z_0$ is in the region of attraction of the equilibrium point $\gamma_1(x_0, 0)$, and thus the initial condition $z_0 - \gamma_1(x_0,0)$ for the boundary layer system $b_1$ with the frozen variables $x = x_0$, $t = 0$, is in the region of attraction of the equilibrium point $b_1 = 0$. Then, since the system (\ref{boundary_3}) - (\ref{boundary_2}) is linear in the variables $b_2$ and $b_3$, it follows from Assumption 3 that  $z_0 - \gamma_1(x_0,0)$, ${\psi_z}_0 -  \gamma_2(x_0, 0){\psi_x}_0$,  ${\psi_z}_0 {\psi_x}_0^T -  \gamma_2(x_0, 0){\psi_x}_0 {\psi_x}_0^T$ for the variables $b_1$, $b_2$ and $b_3$ are in the region of attraction of the equilibrium point at the origin. Thus, the assumptions of the Tikhonov's theorem on a finite time-interval \cite{khalil} are satisfied and applying the theorem to the moment dynamics of the original system in  (\ref{mo_ori1}) - (\ref{mo_ori2}) and the dynamics of $x$ and $z$ given by (\ref{sys_ori1}) - (\ref{sys_ori2}), we obtain the result (\ref{thm1_res1}) - (\ref{thm1_res2}).\end{proof}

\textbf{Remark:} From \cite{kampen}, we have that $\psi_x(t)$ and $\bar{\psi_x}(t)$ are multivariate Gaussian processes. Since a  Gaussian distribution is fully characterized by their mean and the covariance, and Theorem 1 gives  
\begin{align}
	lim_{\epsilon \to 0} \mathbb{E}[\psi_x(t)] &=  \mathbb{E}[\bar{\psi}_x(t)],\\
	lim_{\epsilon \to 0} \mathbb{E}[\psi_x(t)\psi_x(t)^T] &=  \mathbb{E}[\bar{\psi}_x(t)\bar{\psi}_x(t)^T],
\end{align}
we have that for given $t \in [0, t_1]$, the vector $\psi_x(t)$ converges in distribution to the vector $\bar{\psi}_x(t)$ as $\epsilon \to 0$.

\section{Example}
\label{examples}
In this section we demonstrate the application of the model reduction approach on an example of a biolomelcular system. Consider the system in Fig. \ref{example}, where a phosphorylated protein $\rm{X}^*$ binds to a downstream promoter site p which produces the protein G. 
Such a setup can be seen commonly occurring in natural biological systems, an example being the two component signaling systems in bacteria \cite{koretke2000evolution}.  Moreover, similar setups are also used in synthetic biology to design biological circuits that are robust to the loading effects that appear due to the presence of downstream components \cite{DDV_MSB, Jayanthi_TAC}.   

\begin{figure}[th] 
\centering
\includegraphics[width= 0.6 \textwidth]{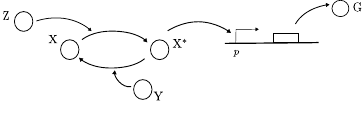}
\vspace{-1em}
\caption{\small{Protein X is phosphorylated by kinase Z and dephosphorylated by phosphatase Y. Phosphorylated protein $\rm{X}^*$ binds to the downstream promoter p.}}
\label{example}
\end{figure}  

The chemical reactions for the system are as follows:
$\mathrm{X} + \mathrm{Z} \xrightarrow{k_1}\mathrm{X^*}+\mathrm{Z}, \ 
\mathrm{X^*} + \mathrm{Y} \xrightarrow{k_2}\mathrm{X}+\mathrm{Y}, \ 
\mathrm{X^*} + \mathrm{p} \xrightleftharpoons[k_{\rm{off}}]{k_{\rm{on}}} \mathrm{C},$
$\mathrm{C} \xrightarrow{\beta} \mathrm{C} + \mathrm{G},$\
$\mathrm{G} \xrightarrow{\delta} \phi $.
The protein X is phosphorylated by kinase Z and dephosphorylated by phosphatase Y with the rate constants $k_1$ and $k_2$, respectively. The binding between phosphorylated protein $\rm{X}^*$ and promoter p produces a complex C, where $k_{\rm{on}}$ and  $k_{\rm{off}}$ are the binding and unbinding rate constants. Protein G is produced at rate $\beta$, which encapsulates both transcription and translation processes and decays at rate $\delta$, which includes both degradation and dilution. We assume that the total concentration of protein X and promoter p are conserved, giving ${X}_{tot} = {X} + {X}^* + {C}$ and ${p}_{tot} = {p} + {C}$, where the lower-case letters denote the corresponding macroscopic concentrations. Then, the dynamics for the macroscopic concentrations of X$^*$, C and G can be written as 

\small 
\begin{align}
\label{ex1}	 \frac{dx^*}{dt} &= k_1Z(t)(X_{tot} - {x^*} - {c}) - k_2Yx^*  - k_{\rm{on}}x^*(p_{tot} - c) + k_{\rm{off}}c, \\
		\frac{dc}{dt} &= k_{\rm{on}}x^*(p_{tot} - c) -  k_{\rm{off}}c,  \\
\label{ex2}	\frac{dg}{dt} &= \beta c - \delta g.
\end{align}
\normalsize
Binding and unbinding reactions are much faster than phosphorylation/dephosphorylation, and therefore, we can write $k_2Y/k_{\rm{off}} = \epsilon \ll 1$. Taking $k_{d} = k_{\rm{off}}/k_{\rm{on}}$, we have 

\small 
\begin{align}
\label{ex3} 	\frac{dx^*}{dt} &= k_1Z(t)(X_{tot} - {x^*} - {c}) - k_2Yx^*   - \frac{k_2Y}{\epsilon k_d}x^*(p_{tot} - c) +  \frac{k_2Y}{\epsilon}c,  \\
	 		\frac{dc}{dt} &= \frac{k_2Y}{\epsilon k_d}x^*(p_{tot} - c) -  \frac{k_2Y}{\epsilon}c,  \\
\label{ex4} 	\frac{dg}{dt} &= \beta c - \delta g.
\end{align}\normalsize
The system (\ref{ex3}) - (\ref{ex4}) is in the form of system (\ref{LNA_intro1}), with $y = [x^*, \ c, \ g ]^T$. To take the system in to the singular perturbation form given in (\ref{sys_ori1}) - (\ref{sys_ori2}), we consider the change of variable $v = x^* + c$, which yields

\small 
\begin{align} 
\label{ex5} 	\frac{dv}{dt} &= k_1Z(t)( X_{tot} - v) - k_2Y(v - c), \\
			\frac{dg}{dt} &= \beta c - \delta g,\\
\label{ex6} 	\epsilon \frac{dc}{dt} &= \frac{k_2Y}{k_d}(v - c)(p_{tot} - c) -  k_2Yc.
\end{align}
\normalsize
This change of coordinates corresponds to having $A_x = [1 \ 1 \ 0, \ 0 \ 0 \ 1]^T$, $A_z = [0 \ 1 \ 0]$, $x = [v, g]^T$ and $z = c$ in Claim 1. Then, the dynamics for the stochastic fluctuations can be written as 

\small 
\begin{align}
\label{ex7} 	\frac{d\psi_{v}}{dt} &= (-k_1Z(t) - k_2Y)\psi_{v}  + k_2Y \psi_c  + \sqrt{k_1Z(t)( X_{tot} - v)} \Gamma_1 - \sqrt{k_2Y(v - c)}\Gamma_2, \\
	\frac{d\psi_g}{dt} &= \beta \psi_c - \delta \psi_g + \sqrt{\beta c} \Gamma_3 - \sqrt{\delta g}\Gamma_4,\\
 \notag	\epsilon \frac{d\psi_c}{dt} &=  \frac{k_2Y}{k_d} (p_{tot} - c) \psi_v  + \left(- \frac{k_2Y}{k_d}v -  \frac{k_2Y}{k_d} p_{tot} + 2  \frac{k_2Y}{k_d} c - k_2 Y\right)\psi_c \\&  \label{ex8} + \sqrt{ \epsilon \frac{k_2Y}{k_d}(v - c)(p_{tot} - c)}\Gamma_5  -  \sqrt{ \epsilon {k_2Y}c} \Gamma_6.
\end{align}
\normalsize
with $\psi_x = [\psi_v, \psi_g]^T$ and $\psi_x = [\psi_v, \psi_g]^T$. Therefore, the equations (\ref{ex5}) - (\ref{ex8}) are in the form of the original system in (\ref{sys_ori1}) - (\ref{sys_ori2}) with $x = [v, g]^T$ and $z = c$. 
It follows that Assumptions \ref{a1} and \ref{a2} are satisfied since the system functions of  (\ref{ex5}) - (\ref{ex6}) are polynomials of the state variables. We evaluate $f_z = \frac{k_2Y}{k_d}(v - {z})(p_{tot} - {z}) -  k_2Y{z} = 0$, which yields the unique solution $z(v) = \frac{1}{2}(v + p_{tot} + k_d) - \frac{1}{2}\sqrt{(v + p_{tot} + k_d)^2 - 4vp_{tot}}$, feasible under the physical constraints $0 \le c \le p_{tot}$. We have that Assumption 3 is  satisfied since $\frac{\partial f_z}{\partial z}$ is negative.
Thus, we obtain the reduced system

\small 
\begin{align*}
	\frac{d\bar{v}}{dt} &= k_1Z(t)(X_{tot} - \bar{v}) - k_2Y(\bar{v} - \bar{c}), \\
	\frac{d\bar{g}}{dt} &= \beta \bar{c} - \delta \bar{g},\\
\notag	\frac{d\bar{\psi_{v}}}{dt} &= (-k_1Z(t) - k_2Y)\bar{\psi_{v}}  + k_2Y \bar{\psi_c}   + \sqrt{k_1Z(t)( X_{tot} - \bar{v})} \Gamma_1 - \sqrt{k_2Y(\bar{v} - \bar{c})}\Gamma_2,\\
	\frac{d\bar{\psi_g}}{dt} &= \beta \bar{\psi_c} - \delta \bar{\psi_g}+ \sqrt{\beta \bar{c}} \Gamma_3 - \sqrt{\delta \bar{g}}\Gamma_4,
\end{align*}
where 
\begin{align*}
	\bar{c} &= \frac{1}{2}(\bar{v} + p_{tot} + k_d) - \frac{1}{2}\sqrt{(\bar{v} + p_{tot} + k_d)^2 - 4\bar{v}p_{tot}},   \\
	\bar{\psi}_c &=  \frac{ (p_{tot} - \bar{c}) \bar{\psi}_v}{  (\bar{v}  +  p_{tot} - 2\bar{c} + k_d)}.
\end{align*}\normalsize

Fig. \ref{sim} includes the simulation results for the error in second moments of the stochastic fluctuations of $v$ and $g$. We use zero initial conditions for all variables and thus the first moment of the stochastic fluctuations remains zero at all times. The simulations are carried out with the Euler-Maruyama method and the sample means are calculated using $3 \times 10^6$ realizations. 
\begin{figure}[h]
	\centering
		\includegraphics[scale=0.45]{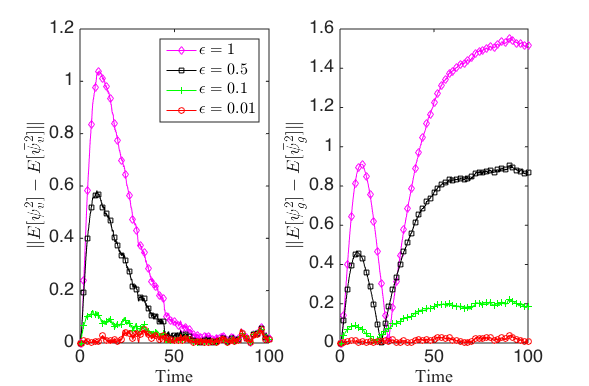}
	\vspace{-1em}
\caption{\small{Errors in the second moments decreases as $\epsilon$ decreases. The parameters used are $Z(t) = 1, \ k_1 = 0.01 , \ k_2 = 0.01, \ k_d = 100 , \ X_{tot} = 200, \ Y = 20, \ p_{tot} = 100, \ \delta = 0.1, \ \beta  = 0.1, v(0) = 0$, $c(0) = 0$, $g(0) = 0$, $\psi_v(0) = 0$, $\psi_g(0) = 0$.}}
\label{sim}
\end{figure}

\section{Conclusion}
\label{conclusion}
In this work, we obtained a reduced order model for the Linear Noise Approximation of biomolecular systems with separation in time-scales. It was shown that, for a finite time-interval the first and second moments of the reduced system are within an $O(\epsilon)$-neighborhood of the first and second moments of the slow variable dynamics of the original system. This result can be used to approximate the slow variable dynamics of the LNA with a system of reduced dimensions, which will be useful in analysis and simulations of biomolecular systems especially when the system has high dimension. The reduced model that we obtain is equivalent to the reduced order model derived in \cite{pahlajani2011stochastic}. Our results are also consistent with the error analysis that they have performed numerically, where it is approximated that the maximum errors in the mean and the variance over time are of $O(\epsilon)$.

In future work, we aim to extend this analysis to obtain an approximation for the fast variable dynamics.

\section*{Acknowledgements}
This work was funded by AFOSR grant  \# FA9550-14-1-0060.

\section*{Appendix}

\textbf{A-1: }Applying the coordinate transformation $x = A_xy$, $z= A_zy$ to equation (\ref{LNA_intro1}), with $\tilde{a}(y,t) = [\hat{a}_s(y,t), (1/\epsilon)\hat{a}_f(y,t)]^T$ and $v = [v_1, \ldots, v_{m_s}, v_{m_s + 1}, \ldots, v_{m_s + m_f}]$, with $y = A^{-1}[x,z]^T$ we have 

\small \vspace{-1em}
\begin{align}
\notag	\dot{x} &=  A_x f( A^{-1}[x,z]^T,t) \\& \notag = A_x \sum_{i = 1}^{m_s} v_i \hat{a}_{si}( A^{-1}[x,z]^T,t) +  A_x \sum_{i = {v_{m_s + 1}}}^{m_s + m_f} v_i (1/\epsilon)\hat{a}_{fi}( A^{-1}[x,z]^T,t) \\& \label{tr_eq1} = f_x(x,z,t),\\
\notag	\dot{z} &= A_z f( A^{-1}[x,z]^T,t) \\& \notag = A_z \sum_{i = 1}^{m_s} v_i \hat{a}_{si}( A^{-1}[x,z]^T,t)   +  A_z \sum_{i = {v_{m_s + 1}}}^{m_s + m_f} v_i (1/\epsilon)\hat{a}_{fi}( A^{-1}[x,z]^T,t)  \\& \label{tr_eq2} = \frac{1}{\epsilon}f_z(x,z,t,\epsilon).
\end{align}
\normalsize
Thus, from equation (\ref{tr_eq1}), if follows that $A_xv_i = 0$ for $i = m_s + 1, \ldots, m_s + m_f$.

Applying the coordinate transformation $\psi_x = A_x\xi$, $\psi_z= A_z\xi$, to equation (\ref{LNA_intro2}), we have that 

\small \vspace{0em}
\begin{align*}
	\dot{\psi_x} &=   A_x[A(y,t)\xi] + A_x\sigma(y,t) \Gamma,\\
	\dot{\psi_z} &=   A_z[A(y,t)\xi] + A_z\sigma(y,t) \Gamma.
\end{align*}
\normalsize
Since $A(y,t) = \frac{\partial f(y,t)}{\partial y}$ and $y = A^{-1}[x,z]^T$, using the chain rule we can write
\small{
\begin{align*}
	&\dot{\psi_x} 
	= A_x\left[\frac{\partial f( A^{-1}[x,z]^T,t)}{\partial x}\frac{\partial x}{ \partial  y} + \frac{\partial f( A^{-1}[x,z]^T,t)}{\partial z}\frac{\partial z}{ \partial  y}\right]\xi  \\& + A_x\left [v_1\sqrt{\tilde{a}_1( A^{-1}[x,z]^T,t)}, \ldots, v_m\sqrt{\tilde{a}_m( A^{-1}[x,z]^T,t)}\right] \Gamma,\\
	&\dot{\psi_z}  =  A_z\left[\frac{\partial f( A^{-1}[x,z]^T,t)}{\partial x}\frac{\partial x}{ \partial  y} + \frac{\partial f( A^{-1}[x,z]^T,t)}{\partial z}\frac{\partial z}{ \partial y}\right]\xi  \\& + A_z\left [v_1\sqrt{\tilde{a}_1( A^{-1}[x,z]^T,t)}, \ldots, v_m\sqrt{\tilde{a}_m( A^{-1}[x,z]^T,t)}\right] \Gamma.
\end{align*}} \normalsize
Using the linearity of the differentiation operator and the transformation $x = A_xy$, $z= A_zy$, we obtain 
\small{
\begin{align*}
	& \dot{\psi_x} = \left[\frac{\partial A_x f( A^{-1}[x,z]^T,t)}{\partial x}A_x + \frac{\partial A_x f( A^{-1}[x,z]^T,t)}{\partial z}A_z\right]\xi \\&  + A_x\left [v_1\sqrt{\tilde{a}_1( A^{-1}[x,z]^T,t)}, \ldots, v_m\sqrt{\tilde{a}_m( A^{-1}[x,z]^T,t)}\right] \Gamma,\\
	& \dot{\psi_z} = \left[\frac{\partial A_z f( A^{-1}[x,z]^T,t)}{\partial x}A_x  + \frac{\partial A_z f( A^{-1}[x,z]^T,t)}{\partial z}A_z\right]\xi  \\& + A_z\left [v_1\sqrt{\tilde{a}_1( A^{-1}[x,z]^T,t)}, \ldots, v_m\sqrt{\tilde{a}_m( A^{-1}[x,z]^T,t)}\right] \Gamma.
\end{align*}} \normalsize
From (\ref{tr_eq1}) - (\ref{tr_eq2}), we have that $ A_x f( A^{-1}[x,z]^T,t)  = f_x(x,z,t)$ and $ A_z f( A^{-1}[x,z]^T,t)  = \frac{1}{\epsilon}f_z(x,z,t,\epsilon)$. Furthermore, substituting for $\tilde{a}( A^{-1}[x,z]^T,t) =  [\hat{a}_s( A^{-1}[x,z]^T,t), (1/\epsilon)\hat{a}_f( A^{-1}[x,z]^T,t)]^T$, we have
\small{
\begin{align}
\notag	&\dot{\psi_x} 
	= \frac{\partial f_x(x,z,t)}{\partial x}\psi_x + \frac{\partial f_x(x,z,t)}{\partial z}\psi_z + \\&  \notag A_x\left [v_1\sqrt{\hat{a}{_s}_1(A^{-1}[x,z]^T,t)}, \ldots, v_{m_s}\sqrt{\hat{a}{_s}_{m_s}(A^{-1}[x,z]^T,t)}\right] \Gamma_x  \\& \label{tr_eq3} + A_x \bigg[v_{{m_s} + 1}\sqrt{\frac{1}{\epsilon}\hat{a}{_{f1}}(A^{-1}[x,z]^T,t)}, \ldots,   v_{m_s + m_f}\sqrt{\frac{1}{\epsilon} \hat{a}{_{fm_f}}(A^{-1}[x,z]^T,t)}\bigg] \Gamma_f,\\
\notag	&\dot{\psi_z} = \frac{\partial \frac{1}{\epsilon}f_z(x,z,t,\epsilon)}{\partial x}\psi_x + \frac{\partial \frac{1}{\epsilon} f_z(x,z,t,\epsilon)}{\partial z}\psi_z + \\& \notag A_z\left [v_1\sqrt{\hat{a}{_s}_1(A^{-1}[x,z]^T,t)}, \ldots, v_{m_s}\sqrt{\hat{a}{_s}_{m_s}(A^{-1}[x,z]^T,t)}\right] \Gamma_x \\& \label{tr_eq4} + A_z\bigg[v_{m_s +1} \sqrt{ \frac{1}{\epsilon} \hat{a}{_f}_1(A^{-1}[x,z]^T,t)}, \ldots,  v_{{m_s +m_f}}\sqrt{ \frac{1}{\epsilon} \hat{a}{_f}_{m_f}(A^{-1}[x,z]^T,t)}\bigg] \Gamma_f ,
\end{align}}
\normalsize
where $\Gamma = [\Gamma_x, \Gamma_f]^T$. From (\ref{tr_eq1}) we have that, $A_x v_i = 0$ for $i = m_s+1, \ldots, m_s+m_f$. Then, multiplying (\ref{tr_eq4}) by $\epsilon$, we can write the system (\ref{tr_eq3}) - (\ref{tr_eq4}), in the form of system (\ref{sys_ori3_cl}) - (\ref{sys_ori4_cl}), where $\Gamma_z = [\Gamma_x, \Gamma_f]^T$.

\bibliographystyle{unsrt}
\bibliography{CDC2016_v2}
\end{document}